\DeclareMathOperator{\Id}{Id}
\newtheorem{theorem}{Theorem}[section]
\newtheorem{lemma}[theorem]{Lemma}
\newtheorem{proposition}[theorem]{Proposition}
\newtheorem{corollary}[theorem]{Corollary}
\theoremstyle{definition}
\newtheorem{example}[theorem]{Example}
\theoremstyle{remark}
\newtheorem{remark}[theorem]{Remark}
\numberwithin{equation}{section}
\def\separa{\hbox to 14 truecm{\hrulefill}}
\author[P. Danchev]{Peter Danchev}
\address{Institute of Mathematics and Informatics, Bulgarian Academy of Sciences, 1113 Sofia, Bulgaria}
\email{danchev@math.bas.bg}
\thanks{The first author was partially supported by the Bulgarian National Science Fund under Grant KP-06 No. 32/1 of December 07, 2019.}
\author[E. Garc\'\i a]{Esther Garc\'\i a}
\address{ Departamento de Matem\'{a}tica  Aplicada, Ciencia e Ingenier\'{\i}a de los Materiales y Tecnolog\'{\i}a Electr\'onica,
Universidad Rey Juan Carlos, 28933 M\'{o}s\-to\-les (Madrid), Spain}
\email{esther.garcia@urjc.es}
\author[M. G\'omez Lozano]{Miguel G\'omez Lozano}
\address{Departamento de \'Algebra, Geometr\'{\i}a y
Topolog\'{\i}a, Universidad de M\'alaga, 29071 M\'alaga, Spain}
\thanks{The second two authors were partially supported by  MTM2017-84194-P (AEI/FEDER, UE),  and by the Junta de Andaluc\'{\i}a FQM264.}
\email{miggl@uma.es}
\begin{document}

\title[Decompositions of Matrices into ...]{Decompositions of Matrices into \\ Potent and Square-Zero Matrices}
\maketitle

\begin{abstract} In order to find a suitable expression of an arbitrary square matrix over an arbitrary finite commutative ring, we prove that every such a matrix is always representable as a sum of a potent matrix and a nilpotent matrix of order at most two when the Jacobson radical of the ring has zero-square. This somewhat extends results of ours in Lin. \& Multilin. Algebra (2021) established for matrices considered on arbitrary fields. Our main theorem also improves on recent results due to Abyzov et al. in Mat. Zametki (2017), \v{S}ter in Lin. Algebra \& Appl. (2018) and Shitov in Indag. Math. (2019).

\end{abstract}

\bigskip
{\footnotesize \textit{Key words}: Finite commutative ring, Nilpotent matrix, Potent matrix, Lifting potent element}

{\footnotesize \textit{2010 Mathematics Subject Classification}: 15A24, 15B33, 16U99}

\section{Introduction and Fundamentals}

We start the frontier of this paper by recalling that an element $x$ of an arbitrary ring $R$ is said to be {\it nilpotent} if there is an integer $i>0$ such that $x^i=0$ whereas an element $y$ from $R$ is said to be {\it potent}, or more exactly {\it $m$-potent}, if there is a natural number $m\geq 2$ with $y^m=y$. In particular, all the idempotents are always $2$-potent elements.

Our current work is devoted to the further study, firstly somewhat initiated in \cite{BCDM}, of decomposing square matrices as a sum of a potent and a nilpotent. Concretely, a brief retrospection of the most important results in this direction is as follows:

In \cite{BCDM} was proven that each matrix from the ring $\mathbb{M}_n(\mathbb{F}_2)$ of $n\times n$ matrices over the field $\mathbb{F}_2$ of two elements is a sum of an idempotent matrix and a nilpotent matrix -- even something more, if the matrix ring $\mathbb{M}_n(F)$ over an arbitrary field $F$ possesses this property, then $F\cong \mathbb{F}_2$. This result was substantially strengthened by \v{S}ter in \cite{S} who proved that $\mathbb{M}_n(\mathbb{F}_2)$ is actually a sum of an idempotent matrix and a nilpotent matrix of index at most $4$. Lately, this result was significantly improved by Shitov in \cite{Sh} for certain matrix sizes $n$. Moreover, an important work was done by de Seguins Pazzis in \cite{dSP}, where a valuable discussion on the decomposition of a matrix as a sum of an idempotent and a square-zero matrix is provided.

On the other vein, Abyzov and Mukhametgaliev showed in \cite{AM} that, for all naturals $n\geq 1$, any element of the ring $\mathbb{M}_n(F)$ is presented as a sum of a nilpotent and a $q$-potent element, provided that $F$ is a field of cardinality $q$ -- specifically, in \cite[Theorem 2]{AM} was showed that some square matrix over finite fields are expressible as a sum of a potent and a nilpotent but the order of the existing nilpotent is, in general, greater than $2$. Also, a recent paper \cite{B} by Breaz deals with the more exact presentation of matrices over fields of odd cardinality $q$ as a sum of a $q$-potent matrix and a nilpotent matrix of order $3$. Besides, it was constructed in \cite[Example 6]{B} an ingenious example of a $3\times 3$ matrix over the field $\mathbb{F}_3$ of three elements that cannot be presented as the sum of a $3$-potent and a nilpotent matrix of order $2$ (in other terms, the latter matrix is also called {\it square-zero} or, equivalently, {\it zero-square}). Furthermore, improving the aforementioned results from \cite{B}, we establish in \cite{DGL} that each square matrix over any infinite field as well as each matrix over some special finite fields can be expressed as a sum of a potent matrix and a square-zero matrix.

So, a question which logically arises is of whether or not our results in \cite{DGL} could be expanded for some kinds of (finite) commutative rings, that is, is every square matrix over a finite commutative ring of square-prime characteristic decomposed as a sum of a potent matrix and a square-zero matrix (for example, for rings of the sort $\mathbb{Z}_{p^2}$ for some fixed prime $p$)? To keep a record straight, we notice that a similar representation of such a matrix ring over $\mathbb{Z}_4$ already exists in terms of a nilpotent of order less than or equal to $8$ and an idempotent (see, e.g., \cite{S}). Even more generally, it was established in \cite[Lemma 1]{AM} and \cite[Theorem 4]{AM} that, for all $n,m\in \mathbb{N}$, the matrices in $\mathbb{M}_n(\mathbb{Z}_{p^m})$ are presentable as the sum of a nilpotent matrix and a $p$-potent matrix, whenever $p$ is a prime. However, the exact bound (of course, if it eventually exists) of the existing nilpotent matrix is not explicitly calculated yet.

And so, being seriously motivated by the present idea, in what follows, we shall completely resolve \cite[Problem 2]{DGL} even in a more general setting (see, e.g., Theorem~\ref{main}) and, besides, we shall strengthen the previously mentioned achievements from \cite{AM}, \cite{S} and \cite{Sh}, respectively.

Likewise, for completeness of the introductory section, we refer to the bibliography of the cited by us articles and also concretize that some related results can be found by the interested reader in \cite{BM} and \cite{JS} along with the given references therewith, respectively.

\section{Main Results and Conjecture}

We begin here with the following simple but useful claim.

\begin{lemma}\label{invertible} Let $R$ be a finite unital commutative ring.
For every invertible matrix $A\in \mathbb{M}_n(R)$ there exists $m\in\mathbb{N}$ such that $A^{m-1}=\Id$ and $A^m=A$.
\end{lemma}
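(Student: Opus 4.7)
The plan is to exploit the finiteness of $R$ directly: since $R$ is finite, the matrix ring $\mathbb{M}_n(R)$ has cardinality $|R|^{n^2}$, so it is a finite ring, and its group of units $GL_n(R)$ is a finite group. The statement then follows essentially from Lagrange's theorem applied to a finite group.

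Concretely, I would argue as follows. First I would observe that an invertible $A \in \mathbb{M}_n(R)$ belongs to $GL_n(R)$, a finite group. Therefore the cyclic subgroup generated by $A$ is finite, meaning there exists a smallest positive integer $k$ with $A^k = \Id$ (equivalently, one can simply observe that the sequence $A, A^2, A^3, \dots$ must repeat inside the finite set $\mathbb{M}_n(R)$, and then use invertibility of $A$ to cancel and obtain a power equal to the identity). Setting $m := k+1$, one immediately gets $A^{m-1} = A^k = \Id$ and $A^m = A \cdot A^{m-1} = A \cdot \Id = A$, which is exactly the desired conclusion.

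There is no real obstacle here: the whole argument rests on the single fact that a finite semigroup which happens to contain inverses is a finite group, so every element has finite multiplicative order. The only point worth making carefully is that invertibility is needed to pass from a generic repetition $A^i = A^j$ (with $i < j$) to a relation of the form $A^{j-i} = \Id$; without invertibility one only obtains eventual periodicity rather than a true root of unity. Since the lemma explicitly hypothesizes that $A$ is invertible, this is automatic.
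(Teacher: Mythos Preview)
Your proof is correct and essentially identical to the paper's: the paper also observes that the set $\{A^0,A^1,\dots\}$ is finite, finds $k<l$ with $A^k=A^l$, cancels via invertibility to get $A^{l-k}=\Id$, and sets $m-1=l-k$. Your additional remark that this is just the finiteness of the cyclic subgroup in $GL_n(R)$ is a fine repackaging of the same idea.
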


\begin{proof} Let $A$ be an invertible matrix in  $\mathbb{M}_n(R)$ and  consider the set of matrices $\{A^0,A^1,\dots, A^n,\dots\}$. Since this set is finite, there exists $k<l$ such that $A^k=A^l$, and since $A$ is invertible $\Id=A^{l-k}$. The claim now follows by taking $m-1=l-k$.
\end{proof}

The following result generalizes \cite[Corollary 3.2]{DGL}, where it was shown that every matrix over a finite field is a sum of potent matrix and a zero-square matrix by using a different approach. The result of this paper is entirely based on the primary rational canonical form of a matrix (\cite[VII.Corollary 4.7(ii)]{H}), which states that every matrix $A\in \mathbb{M}_n(\mathbb{F})$ where $\mathbb{F}$ is a field is similar to a direct sum of companion matrices of prime power polynomials $p_1^{m_{11}}, \dots, p_s^{m_{sk_s}}\in \mathbb{F}[x]$ where each $p_i$ is prime (irreducible) in $\mathbb{F}[x]$. The matrix $A$ is uniquely determined except for the order of the companion matrices of the $p_i^{m_{ij}}$ along its main diagonal. The polynomials $p_1^{m_{11}}, \dots, p_s^{m_{sk_s}}$ are called {\it the elementary divisors} of the matrix $A$.

\begin{proposition}\label{field} Let $\mathbb{F}$ be a finite field. For any matrix $A\in \mathbb{M}_n(\mathbb{F})$ there exists $k\in \mathbb{N}$ such that $A=P+N$, where $N^2=0$, $P^k=P$, $E=P^{k-1}$ is an idempotent with $PE=EP=P$ and $EN=NE=N$.
\end{proposition}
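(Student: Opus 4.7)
The plan is to apply the primary rational canonical form recalled just above the statement, conjugating $A$ to a block-diagonal matrix $B = \bigoplus_i C_i$ where each $C_i$ is the companion matrix of a prime power $p_i^{m_i}$ with $p_i \in \mathbb{F}[x]$ irreducible. All properties in the conclusion ($N^2 = 0$, $P^k = P$, $PE = EP = P$, $EN = NE = N$) are conjugation-invariant, so it suffices to construct the decomposition for $B$; and since these identities split over direct summands, it is enough to construct it on each individual $C_i$ and take direct sums.

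I would split into two cases according to whether $p_i(x)$ equals $x$. If $p_i(x) \neq x$, then $p_i^{m_i}$ has nonzero constant term, so $\det(C_i) \neq 0$ and $C_i$ is invertible; by Lemma~\ref{invertible} there is $k_i$ with $C_i^{k_i-1} = \Id$ and $C_i^{k_i} = C_i$, so take $P_i := C_i$, $N_i := 0$, $E_i := \Id$. If $p_i(x) = x$ and $m_i \geq 2$, then $C_i$ is the standard nilpotent matrix with $1$'s on the subdiagonal; the key observation is that placing an extra $1$ in position $(1, m_i)$ turns $C_i$ into a cyclic permutation. Concretely, take $P_i := C_i + e_1 e_{m_i}^{T}$, which sends $e_j$ to $e_{j+1}$ for $j < m_i$ and $e_{m_i}$ to $e_1$, so $P_i^{m_i} = \Id$ and hence $P_i^{m_i+1} = P_i$. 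Take $N_i := -e_1 e_{m_i}^{T}$; since $e_{m_i}^{T} e_1 = 0$, we get $N_i^{2} = 0$. Set $k_i := m_i + 1$ and $E_i := \Id$. The degenerate subcase $m_i = 1$ (where $C_i = [0]$) is handled by $P_i = N_i = 0$, $E_i = 0$.

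To glue the blocks, set $k - 1 := \operatorname{lcm}_i(k_i - 1)$. Since $k-1$ is a positive multiple of each $k_i - 1$, we get $P_i^{k-1} = (P_i^{k_i - 1})^{(k-1)/(k_i - 1)} = E_i^{(k-1)/(k_i - 1)} = E_i$, using idempotency of $E_i$; hence $P := \bigoplus_i P_i$ satisfies $P^{k-1} = \bigoplus_i E_i$, still idempotent, and $P^k = P \cdot P^{k-1} = P$. The identities $PE = EP = P$ follow formally from $P^k = P$, while $EN = NE = N$ and $N^{2} = 0$ for $N := \bigoplus_i N_i$ hold block by block by construction. Finally, conjugating by the change-of-basis matrix that reduced $A$ to $B$ transports the whole decomposition back to $A$.

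The only genuinely nontrivial ingredient is the cyclic-shift observation in the nilpotent-block case; the invertible-block case is a direct application of Lemma~\ref{invertible}, and gluing the blocks is routine once one notices that choosing $k - 1$ to be the least common multiple of the $k_i - 1$ makes every $P_i^{k-1}$ collapse onto its idempotent $E_i$.
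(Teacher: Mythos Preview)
Your proof is correct and follows essentially the same approach as the paper: primary rational canonical form, the cyclic-shift trick $P_i=C_i+e_1e_{m_i}^{T}$ on nilpotent Jordan blocks, and direct-sum gluing. The only cosmetic differences are that you read off $P_i^{m_i}=\Id$ directly from the permutation structure (the paper invokes Lemma~\ref{invertible} instead) and you take $k-1=\operatorname{lcm}_i(k_i-1)$ rather than the paper's $k-1=\prod_i(k_i-1)$, which yields a smaller exponent but is otherwise the same argument.
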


\begin{proof}
Let us consider the  primary rational canonical form of the matrix $A$. Also, let us split our argument between elementary divisors $q_i(x)$ of $A$ with $q_i(0)\ne 0$ and those with $q_i(0)=0$:

\begin{itemize}
  \item [(i)] Any elementary divisor  $q_i(x)$ with $q_i(0)\ne 0$ gives rise to an invertible companion matrix $C_i$. By Lemma~\ref{invertible} there exists $k_i\in \mathbb{N}$ such that $C_i^{k_i}=C_i$ and $C_i^{k_i-1}=\Id$. Let us denote $P_i:=C_i$ and define $N_i$ as the zero matrix.
  \item [(ii)] Let us suppose that $q_i(x)$ is an elementary divisor (power of an irreducible polynomial in $\mathbb{F}[x]$) such that $q_i(0)=0$. This implies that $q_i(x)=x^{i_s}$ for certain $i_s\in \mathbb{N}$ and its associated companion matrix is of the form
      $$
      C_i=\left(
          \begin{array}{cccc}
            0 & 0 & \dots  & 0 \\
            1 & 0 &  &  \vdots\\
             & \ddots & \ddots &  \\
            0 &  & 1 & 0\\
          \end{array}
        \right)\in M_{i_s}(\mathbb{F}),
              $$
      i.e., it is a nilpotent Jordan block.
      \begin{itemize}
        \item [(ii.1)]
      If $i_s\ge 2$,  write
      $$
      P_i=\left(
          \begin{array}{cccc}
            0 & 0 & \dots  & 1 \\
            1 & 0 &  &  \vdots\\
             & \ddots & \ddots &  \\
            0 &  & 1 & 0\\
          \end{array}
        \right) \quad \text{and}\quad  N_i:=\left(
          \begin{array}{cccc}
            0 & 0 & \dots  & -1 \\
            0 & 0 &  &  \vdots\\
             & \ddots & \ddots &  \\
            0 &  & 0 & 0\\
          \end{array}
        \right)$$ Notice that $P_i$ is an invertible matrix  and by \ref{invertible}  there exists $k_i\in \mathbb{N}$ such that $P_i^{k_i}=P_i$ and $P_i^{k-1}=\Id$ with $C_i=P_i+N_i$.
       \item[(ii.2)] If $i_s=1$, then
        $$
        C_i=\left(
              \begin{array}{c}
                0 \\
              \end{array}
            \right).
        $$
 \end{itemize}
    \end{itemize}

Let $Q$ be the invertible matrix in $\mathbb{M}_n(\mathbb{F})$ such that $Q^{-1}AQ$ is decomposed into its primary rational canonical form (suppose without loss of generality that the blocks corresponding to (ii.2) are written together as the last zero block):
{\tiny
\begin{align*}
  Q^{-1}AQ&=\left(
    \begin{array}{c|c|c|c|c}
      C_1 & 0 &\cdots & 0  &0 \\
      \hline
       0 & C_2 &\cdots & 0  &0 \\
       \hline
       \vdots &\vdots& \ddots  & \vdots  &   \\
      \hline
      0 & 0&\cdots   &C_r & 0  \\
      \hline
     0 & 0&   &   & 0 \\
    \end{array}
  \right)=\underbrace{\left(
    \begin{array}{c|c|c|c|c}
      P_1 & 0 &\cdots & 0  &0 \\
      \hline
       0 & P_2 &\cdots & 0  &0 \\
       \hline
       \vdots &\vdots& \ddots  & \vdots  &   \\
      \hline
      0 & 0&\cdots   &P_r & 0  \\
      \hline
     0 & 0&   &   & 0 \\
    \end{array}
  \right)}_{P'}+\underbrace{\left(
    \begin{array}{c|c|c|c|c}
      N_1 & 0 &\cdots & 0  &0 \\
      \hline
       0 & N_2 &\cdots & 0  &0 \\
       \hline
       \vdots &\vdots& \ddots  & \vdots  &   \\
      \hline
      0 & 0&\cdots   &N_r & 0  \\
      \hline
     0 & 0&   &   & 0 \\
    \end{array}
  \right)}_{N'}
\end{align*}
}

Since each $P_i$ satisfies $P_i^{k_i-1}=\Id$, we have that $(P')^k=P'$ for $k=1+\prod_{i=1}^{r}(k_i-1)$ and therefore $E':=P'^{k-1}$ is an idempotent of  $\mathbb{M}_n(\mathbb{F})$ ($E'^2=(P'^{k-1})^2=P'^kP'^{k-2}=P'^{k-1}=E'$). By construction, $N'^2=0$ and, since $P_i^k=\Id$, in each block it must be that $E'N'=N'E'=N'$.

Finally, for $P:=QP'Q^{-1}$, $N:=QN'Q^{-1}$ and $E:=QE'Q^{-1}$ we have that $E$ is an idempotent of $M_n(\mathbb{F})$ satisfying the properties $E=P^{k-1}$, $A=P+N$, $P^k=P$, $N^2=0$, $E N=N E=N $ and $E P=P E=P $. In particular, $P=EPE$ is invertible in the subring $E \mathbb{M}_n(\mathbb{F})E$ (note that $E \mathbb{M}_n(\mathbb{F})E$ is a unital ring with unit $E$). This, in turn, implies that $P$ is strongly regular, as required.
\end{proof}

The next property of lifting idempotents is well-known, but we list the statement here only for the sake of completeness and for the convenience of the readers.

\begin{lemma}\cite[27.1]{AF}\label{liftingidempotents}
Let $R$ be a ring and let $I$ be a nilpotent ideal of $R$. Then any idempotent of $R/I$ lifts to an idempotent of $R$.
\end{lemma}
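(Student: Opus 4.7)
The plan is to start from an arbitrary preimage $a \in R$ of the given idempotent $\bar e \in R/I$ and correct it by an element of $I$ to produce a genuine idempotent. Setting $\epsilon := a^2 - a$, the hypothesis gives $\epsilon \in I$, and because $\epsilon$ is a polynomial in $a$ it commutes with $a$; nilpotency of $I$ forces $\epsilon^N = 0$ for some $N \geq 1$.

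The cleanest route is quadratic Newton iteration carried out inside the commutative subring of $R$ generated by $1$ and $a$. I would set
\[
a_0 := a, \qquad a_{m+1} := a_m + (1 - 2 a_m)(a_m^2 - a_m),
\]
and prove by induction on $m$ that $a_m \equiv a \pmod{I}$ and that $\epsilon_m := a_m^2 - a_m$ lies in $I^{2^m}$. The inductive step rests on a one-line polynomial calculation: since $\epsilon_m$ is a polynomial in $a_m$ it commutes with $a_m$, and a direct expansion gives
\[
a_{m+1}^2 - a_{m+1} \;=\; \epsilon_m^{\,2}\,(4\epsilon_m - 3),
\]
so the error is squared at each step. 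As soon as $2^m \geq N$ we obtain $\epsilon_m = 0$, hence $a_m$ is idempotent, and $a_m \equiv a \pmod{I}$ shows that $a_m$ projects to $\bar e$ in $R/I$.

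The main—and essentially the only—obstacle is carrying out the improvement identity $a_{m+1}^2 - a_{m+1} = \epsilon_m^{\,2}(4\epsilon_m - 3)$ carefully: all the elements involved lie in the commutative subring $\mathbb{Z}[a_m] \subseteq R$, so this reduces to a polynomial identity over $\mathbb{Z}$ and is routine, but one must be scrupulous about the commutativity of $a_m$ with $\epsilon_m$ (which is what makes the argument work in a non-commutative $R$). A backup strategy, if one prefers to avoid the explicit formula, is induction on the nilpotency index of $I$: lift $\bar e$ first modulo a smaller power of $I$ by the inductive hypothesis and then correct the result using the fact that the remaining error lives in a square-zero ideal, where idempotent lifting is immediate via $a \mapsto 3a^2 - 2a^3$. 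The Newton iteration above, however, subsumes both the bookkeeping and the convergence in a single pass.
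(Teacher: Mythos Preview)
Your argument is correct and complete: the iteration $a_{m+1}=a_m+(1-2a_m)\epsilon_m=3a_m^{2}-2a_m^{3}$ squares the defect at each step, and the identity $\epsilon_{m+1}=\epsilon_m^{2}(4\epsilon_m-3)$ checks out exactly as you wrote (using $(1-2a_m)^2=1+4\epsilon_m$). Your ``backup'' map $a\mapsto 3a^{2}-2a^{3}$ is in fact the very same Newton step, so the two strategies you describe are one and the same.

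There is nothing to compare against in the paper: this lemma is stated there without proof, merely cited as Proposition~27.1 of Anderson--Fuller. What you have written is essentially the standard textbook argument.
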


The following two technicalities on lifting special elements are the key for the establishment of our further results.

\begin{lemma}\label{liftnil} Let $R$ be a ring and let $I$ be a nilpotent ideal of $R$. Let us suppose that $\overline a\in R/I$ has zero-square and that $\bar a$ is a von Neumann regular element in $R/I$. Then the element $\overline  a$ lifts to an element of $R$ with zero-square.
\end{lemma}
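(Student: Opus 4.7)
The plan is to arrange two orthogonal idempotents $\overline{e},\overline{f}$ in $R/I$ that ``frame'' $\overline{a}$ on the left and right, lift them to orthogonal idempotents $e,f$ in $R$, and then set $a:=ea_{0}f$ for any lift $a_{0}$ of $\overline{a}$; the square-zero property will then come for free from $fe=0$.

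Concretely, von Neumann regularity gives $\overline{b}\in R/I$ with $\overline{a}\,\overline{b}\,\overline{a}=\overline{a}$. Set $\overline{e}:=\overline{a}\,\overline{b}$ and $\overline{g}:=\overline{b}\,\overline{a}$, both idempotents with $\overline{e}\,\overline{a}=\overline{a}=\overline{a}\,\overline{g}$. Using $\overline{a}^{2}=0$ one immediately checks $\overline{a}\,\overline{e}=\overline{a}^{2}\overline{b}=0$ and $\overline{g}\,\overline{e}=\overline{b}\,\overline{a}^{2}\,\overline{b}=0$. I would then modify $\overline{g}$ to $\overline{f}:=(1-\overline{e})\overline{g}$; a routine computation exploiting $\overline{g}\,\overline{e}=0$ shows that $\overline{f}$ is still an idempotent, the relation $\overline{a}\,\overline{e}=0$ propagates to $\overline{a}\,\overline{f}=\overline{a}$, and by construction $\overline{e}\,\overline{f}=\overline{f}\,\overline{e}=0$.

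Next, I would lift $\overline{e}$ to an idempotent $e\in R$ via Lemma~\ref{liftingidempotents}, and then apply the same lemma to the corner ring $(1-e)R(1-e)$ and its still-nilpotent ideal $(1-e)I(1-e)$ (whose quotient is $(1-\overline{e})(R/I)(1-\overline{e})$ and does contain $\overline{f}$ as an idempotent) in order to lift $\overline{f}$ to an idempotent $f\in (1-e)R(1-e)$; this automatically forces $ef=fe=0$. Finally, for any lift $a_{0}\in R$ of $\overline{a}$, put $a:=ea_{0}f$. Modulo $I$ this equals $\overline{e}\,\overline{a}\,\overline{f}=\overline{a}\,\overline{f}=\overline{a}$, and
\[
a^{2}=ea_{0}(fe)a_{0}f=0.
\]

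The only delicate point is the manipulation in the second paragraph: producing a symmetric orthogonality $\overline{e}\,\overline{f}=\overline{f}\,\overline{e}=0$ while preserving $\overline{a}\,\overline{f}=\overline{a}$. Once that is in hand, the proof reduces to the standard lifting of orthogonal idempotents modulo a nilpotent ideal and the one-line Peirce identity $(eaf)^{2}=0$ when $fe=0$, so no further obstacle is expected.
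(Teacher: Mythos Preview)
Your proof is correct and follows the same underlying idea as the paper's: frame $\overline{a}$ by a pair of orthogonal idempotents, lift, and use the Peirce identity. The paper, however, shortcuts your second paragraph entirely by observing that once $\overline{e}=\overline{a}\,\overline{b}$ is in hand, one already has $\overline{e}\,\overline{a}(1-\overline{e})=\overline{a}$ (since $\overline{a}\,\overline{e}=\overline{a}^{2}\overline{b}=0$), so the complementary idempotent $1-\overline{e}$ does the job of your $\overline{f}$. Consequently only \emph{one} idempotent lifting is needed, and the lift is simply $e a_{0}(1-e)$. Your construction of $\overline{f}=(1-\overline{e})\overline{g}$ and the second lifting in the corner ring $(1-e)R(1-e)$ are valid but unnecessary.
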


\begin{proof}
Since $\overline a$ is a von Neumann regular element of zero square, there exists $\overline b\in R/I$ such that $\bar a \bar b\bar a=\bar a$, $\bar b\bar a\bar b=\bar b$ and $\bar b^2=0$ (see \cite[Lemma 2.4]{GLAS}). Let us consider the idempotent $\bar e=\bar a\bar b\in R/I$. Notice that $\bar e\bar a (1-\bar e)=\bar a$, because $\bar a\bar e=\bar 0$.
By consulting with \cite[27.1]{AF}, the element $\bar e$ lifts to an idempotent $e\in R$. If now we take any representant $a$ of $\bar a$ in $R$, we will have that $ea(1-e)\in R$ has zero-square and $\overline{ea(1-e)}=\bar e\bar a (1-\bar e)=\bar a$, as claimed.
\end{proof}

For completeness of the exposition, let us recall now that a unital commutative ring is said to be {\it a local ring} if it contains a unique maximal ideal, say $M$. In that case the factor-ring $R/M$ is a field, called {\it the residue field} of $R$ -- cf. \cite[Definition 1.2.9]{BF}. Moreover, any finite commutative ring with identity $R$ can be expressed as a direct sum of local rings and the decomposition is unique up to a permutation of the direct summands (see, e.g., \cite[Theorem 3.1.4]{BF}).

\begin{lemma}\label{liftpotent} Let $R$ be a unital commutative local ring such that its unique maximal ideal $M$ has $M^2=0$. Let $S= \mathbb{M}_n(R)$. Take $P,E\in S$ such that $E$ is an idempotent of $S$ and $P=EPE$ and suppose that there exists $k\in \mathbb{N}$ such that $\overline P^{k-1}=\overline E \in S/rad(S)$. Then there exists a prime $p>1$ such that $P^{(k-1)p}=E$. In particular, $P^{(k-1)p+1}=P$ and $P$ is invertible in $ESE$.
\end{lemma}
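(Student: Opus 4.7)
The plan is to write $Q := P^{k-1}$ and show that $Q$ satisfies a linear recursion whose iteration lands on $E$ as soon as a suitable prime annihilates $M$. First, from $E^2 = E$ and $P = EPE$ one gets $PE = EP = P$ and hence $EP^j = P^jE = P^j$ for every $j \geq 1$; in particular $EQ = QE = Q$. Because $M^2 = 0$, we have $\operatorname{rad}(S) = \mathbb{M}_n(M)$ and $(\operatorname{rad}(S))^2 = 0$. The hypothesis $\overline{P}^{k-1} = \overline{E}$ then reads $Q - E \in \operatorname{rad}(S)$, so $(Q-E)^2 = 0$; expanding and using $EQ = QE = Q$ gives $Q^2 = 2Q - E$. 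A one-line induction upgrades this to
\[
Q^m \;=\; mQ - (m-1)E \;=\; E + m(Q-E) \qquad (m \geq 1),
\]
which rewrites as $P^{m(k-1)} = E + m(Q-E)$.

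It now suffices to exhibit a prime $p$ with $p(Q-E) = 0$. Let $p = \operatorname{char}(R/M)$, which is a prime in the setting of interest (where $R$ will arise as a finite local commutative summand, so its residue field has positive prime characteristic). Then $p \cdot 1_R \in M$, and for any $x \in M$ we obtain $px = (p \cdot 1_R)x \in M^2 = 0$. Thus $pM = 0$, and since every entry of $Q - E$ lies in $M$ we conclude $P^{p(k-1)} = E$. From this, $P^{p(k-1)+1} = P \cdot E = P$ is immediate, and $P^{p(k-1)-1}$ lies in $ESE$ (using $EP^j = P^jE = P^j$ for $j \geq 1$) as a two-sided inverse of $P$ in the corner ring $ESE$, whose identity is $E$.

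The only genuinely delicate step is the passage from $(Q-E)^2 = 0$ to the closed form for $Q^m$; everything else is bookkeeping. The argument would break only if the residue field of $R$ had characteristic zero, which is excluded by the finiteness context in which the lemma is applied, so I expect no serious obstacle beyond verifying this recursion carefully.
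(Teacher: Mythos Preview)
Your proof is correct and follows essentially the same approach as the paper: both arguments write $Q=P^{k-1}=E-U$ with $U\in\operatorname{rad}(S)$, use $(\operatorname{rad}(S))^2=0$ together with $pM=0$ (for $p=\operatorname{char}(R/M)$) to conclude $Q^p=E-pU=E$, and then read off potency and invertibility in $ESE$. The only cosmetic difference is that the paper reaches $Q^p=E-pU$ via the binomial expansion $(E-U)^p$, whereas you derive the closed form $Q^m=E+m(Q-E)$ by induction from $Q^2=2Q-E$; the content is identical.
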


\begin{proof}
Since $\overline P^{k-1}=\overline E$, there exists $U\in rad(S)=\mathbb{M}_n(M)$ such that $E=P^{k-1}+U$. Multiplying on the left and on the right by $E$, we can suppose that $U=EUE$. We know that $R/M$ is a finite field of certain prime characteristic $p$. Thus $p\cdot 1\in M$, so we have that $pM\subset M^2=0$. We, consequently, calculate that $$P^{(k-1)p}=(E-U)^p=E+\sum_{i=1}^p(-1)^i\binom{p}{i}U^i=E,$$
as expected.
\end{proof}

So, we arrive at our central result on decomposing any matrix over special finite commutative rings into a potent matrix and a zero-square matrix.

\begin{theorem}\label{main}
Let $R$ be a finite commutative ring such that its Jacobson radical has zero-square. Then every matrix $A$ in $\mathbb{M}_n(R)$ can be expressed as $P+N$, where $P$ is a potent matrix and $N$ is a nilpotent matrix with $N^2=0$.
\end{theorem}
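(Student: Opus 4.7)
The plan is to pass to the residue field by reducing modulo $J := \operatorname{rad}(S)$ for $S := \mathbb{M}_n(R)$, invoke Proposition \ref{field} there, and then lift the pieces back to $S$ using Lemmas \ref{liftingidempotents}, \ref{liftnil} and \ref{liftpotent}. First I would use the fact that a finite commutative unital ring decomposes as a product of local rings; the hypothesis $\operatorname{rad}(R)^2 = 0$ forces every local factor $R_i$ to satisfy $M_i^2 = 0$, and a componentwise argument (combining exponents by $1 + \operatorname{lcm}(m_1-1,\ldots,m_t-1)$) reduces us to $R$ itself local with $M^2 = 0$. Then $J = \mathbb{M}_n(M)$ satisfies $J^2 = 0$, and $\bar S := S/J \cong \mathbb{M}_n(R/M)$ is a matrix ring over a finite field, to which Proposition \ref{field} applied to $\bar A$ yields a decomposition $\bar A = \bar P_0 + \bar N_0$ together with an idempotent $\bar E_0 = \bar P_0^{\,k-1}$ absorbing both summands.

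Lemma \ref{liftingidempotents} lifts $\bar E_0$ to an idempotent $E \in S$; in the Peirce decomposition $A = A_{11} + A_{12} + A_{21} + A_{22}$ relative to $E$, the three off-corner pieces all lie in $J$ because $\bar A$ is supported in the $\bar E_0$-corner. Since $\bar E_0 \bar S \bar E_0$ is a matrix algebra over a field, $\bar N_0$ is von Neumann regular there; applying Lemma \ref{liftnil} inside the corner ring $ESE$ (whose radical $EJE$ still squares to zero) gives a lift $N_{11} \in ESE$ of $\bar N_0$ with $N_{11}^2 = 0$. I would then set
\[
P := (A_{11} - N_{11}) + A_{12} + A_{21}, \qquad N := N_{11} + A_{22},
\]
so that $A = P + N$. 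The identity $N^2 = 0$ is immediate from $N_{11}^2 = 0$, $A_{22}^2 \in J^2 = 0$, and Peirce orthogonality, since $N_{11} \in ESE$ kills $A_{22} \in (1-E)S(1-E)$ on both sides.

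The crux is the potency of $P$. Write $P = P_c + U$ with $P_c := A_{11} - N_{11} \in ESE$ and $U := A_{12} + A_{21} \in J$; since $\bar P_c = \bar P_0$ satisfies $\bar P_c^{\,k-1} = \bar E$, Lemma \ref{liftpotent} supplies a prime $p$ for which $P_c^{(k-1)p} = E$ and $P_c^{(k-1)p+1} = P_c$. The asymmetric placement of the off-corner data is what makes the argument close up: putting $A_{12}, A_{21}$ into $P$ forces $EUE = 0$ (because $EA_{12}E = EA_{21}E = 0$), while putting $A_{22}$ into $N$ is necessary to preserve $N^2 = 0$. Setting $m := (k-1)p+1$ and expanding $(P_c + U)^m$ inside $S$, the relation $J^2 = 0$ kills every monomial with two or more factors of $U$, and the relation $EUE = 0$ kills every monomial in which a single $U$ is flanked on both sides by positive powers of $P_c \in ESE$. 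Only $P_c^m = P_c$ and the two boundary terms $U P_c^{m-1} = UE = A_{21}$ and $P_c^{m-1} U = EU = A_{12}$ survive, so $P^m = P_c + A_{12} + A_{21} = P$, which completes the verification.
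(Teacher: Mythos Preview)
Your proof is correct and follows essentially the same route as the paper's: reduce to the local case, apply Proposition~\ref{field} modulo $J$, lift $E$, $N$, and (implicitly) $P$ via Lemmas~\ref{liftingidempotents}, \ref{liftnil}, \ref{liftpotent}, and then split the radical correction along the Peirce decomposition of $E$. Your organization is slightly different---you Peirce-decompose $A$ up front and set $P_c:=A_{11}-N_{11}$, whereas the paper lifts $P$ and $N$ first and writes the leftover as $V=A-P-N$---but the resulting potent and square-zero parts coincide (their $P+EVE$ is your $P_c$, their $(1-E)VE+EV(1-E)$ is your $U$), and the key expansion showing $m$-potency via $J^2=0$ and $EUE=0$ is identical.
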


\begin{proof} We know with the aid of the comments alluded to above that $R$ is a direct sum $R=\bigoplus R_i$, where each $R_i$ is a local ring. Then one finds that the decomposition $\mathbb{M}_n(R)=\bigoplus_i \mathbb{M}_n(R_i)$ holds, so that we can express $A$ as a direct sum of matrices over local rings.

Suppose without loss of generality that $R$ is a local ring. Let us denote $S:=\mathbb{M}_n(R)$ and let us decompose $A\in \mathbb{M}_n(R)$ into a potent and a zero-square matrix. Let $I$ be the unique maximal ideal of $R$. By hypothesis, one calculates that $I^2=0$ because $I$ coincides with the Jacobson radical $rad(R)$ of $R$. Clearly, $J:=rad(\mathbb{M}_n(R))=\mathbb{M}_n(rad(R))=\mathbb{M}_n(I)$ and hence $J^2=0$.

Let us consider the residue class of $A$ modulo $J$: In fact, $\bar A\in  S/J\cong  \mathbb{M}_n(R/I)$. Since $R/I$ is a finite field, by virtue of Proposition~\ref{field} there exists $k\in \mathbb{N}$ such that $\bar A=\hat P+\hat N$ with $\hat P^{k}=\hat P$, $\hat N^2=0$, and $\hat E:=\hat P^{k-1}$ is an idempotent with $\hat E\ \hat N=\hat N\ \hat E=\hat N $ and $\hat E\ \hat P=\hat P\ \hat E=\hat P$.

Now, with Lemma~\ref{liftingidempotents} at hand, there exists an idempotent $E$ of $S$ such that $\overline E=\hat E$. Let us consider $P\in S$ such that $\overline P=\hat P$. Since $\hat E\Hat P=\hat P\hat E=\hat P$, we have that $\overline{EP}=\overline{PE}=\overline P$ and we can suppose (by replacing $P$ by $EPE$) that $EP=PE=P$. Applying
Lemma~\ref{liftnil} there exists $N\in S$ such that $N^2=0$ with $\overline N=\hat N$ and we can suppose, again replacing $N$ by $ENE$, that $N=EN=NE$. Furthermore, employing Lemma~\ref{liftpotent}, there exists a prime $p>1$ such that $P^{(k-1)p}=E$.

Let us take $V\in J$ such that $A=P+N+V$, and write
 $$
 A=\underbrace{P+EVE+(1-E)VE+EV(1-E)}+\underbrace{N+(1-E)V(1-E)}.\qquad{(*)}
 $$

\noindent (1) Let us show that $P+EVE+(1-E)VE+EV(1-E)$ is a potent element of $S$: In fact, notice that for any $n\in \mathbb{N}$
\begin{align*}
    ((P&+EVE)+(1-E)VE+EV(1-E))^n\\
   &=(P+EVE)^n+(1-E)VE(P+EVE)^{n-1}+(P+EVE)^{n-1}EV(1-E)
\end{align*}
because $EVE$, $(1-E)VE$, $EV(1-E)$ and $(1-E)V(1-E)$ belong to an ideal of zero-square. Moreover, since $P$ is invertible in $ESE$, the matrix $P+EVE$ is invertible and, therefore, it is $k$-potent. Furthermore, $P+EVE$ satisfies the conditions of Lemma~\ref{liftpotent}, so we detect that $((P+EVE)^{(k-1)p})=E$ and $(P+EVE)^{(k-1)p+1}=P+EVE$. Then
\begin{align*}
((P+EVE)&+(1-E)VE+EV(1-E))^{(k-1)p+1}\\&=(P+EVE)^{(k-1)p+1}+(1-E)VE(P+EVE)^{(k-1)p}\\&+(P+EVE)^{(k-1)p}EV(1-E)\\&=P+EVE+(1-E)VE+EV(1-E).
\end{align*}

\noindent (2) Let us show that $N+(1-E)V(1-E)$ has zero-square: Indeed, since $EN=NE=N$, one has that $N(1-E)V(1-E)=0=(1-E)V(1-E)N$, and since $(1-E)V(1-E)$ belongs to an ideal of zero-square, one has that
$$(N+(1-E)V(1-E))^2=N^2+((1-E)V(1-E))^2=0.$$

Therefore, equality $(*)$ provides the desired decomposition of $A$ into a potent matrix and a zero-square matrix, as wanted.
\end{proof}

Since $\mathbb{Z}_{p^2}$ is a unital commutative local ring whose Jacobson radical has zero-square, we immediately obtain the following consequence, which completely resolves \cite[Problem 2]{DGL} when $p=2$, i.e., for the ring $\mathbb{Z}_4$.

\begin{corollary}\label{Zp2} For all natural numbers $n$ and primes $p$, every matrix in $\mathbb{M}_n(\mathbb{Z}_{p^2})$ can be expressed as $P+N$, where $P$ is a potent matrix and $N$ is a matrix with $N^2=0$.
\end{corollary}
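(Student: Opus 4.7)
The plan is to simply verify that $\mathbb{Z}_{p^2}$ satisfies the hypotheses of Theorem~\ref{main} and then invoke that theorem directly.

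First, I would note that $\mathbb{Z}_{p^2}$ is finite (it has exactly $p^2$ elements) and commutative with identity. Second, I would identify its Jacobson radical: since every element not divisible by $p$ is a unit modulo $p^2$, the ring $\mathbb{Z}_{p^2}$ is local with unique maximal ideal $M = p\mathbb{Z}_{p^2}$, which coincides with $\operatorname{rad}(\mathbb{Z}_{p^2})$. Third, I would verify the zero-square condition: $M^2$ is generated by $p \cdot p = p^2 \equiv 0 \pmod{p^2}$, so $M^2 = 0$.

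Having checked all hypotheses of Theorem~\ref{main}, the conclusion is immediate: every matrix $A \in \mathbb{M}_n(\mathbb{Z}_{p^2})$ decomposes as $A = P + N$ with $P$ potent and $N^2 = 0$. There is no real obstacle here, since all the substantive work has been done in establishing Theorem~\ref{main}; the corollary is essentially a remark pointing out the most natural family of examples to which the theorem applies, and in particular the case $p=2$ resolves Problem~2 of \cite{DGL} for the ring $\mathbb{Z}_4$.
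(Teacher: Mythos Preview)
Your proposal is correct and matches the paper's own justification: the paper simply observes that $\mathbb{Z}_{p^2}$ is a finite commutative local ring whose Jacobson radical $p\mathbb{Z}_{p^2}$ has zero-square, and then invokes Theorem~\ref{main}. There is nothing to add.
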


The next construction sheds a further light on the more concrete decomposition of such a type.

\begin{example}
Let us check in an example how this decomposition successfully works. Suppose $S=\mathbb{M}_8(\mathbb{Z}_4)$ and consider the following matrix
  $$
 A= \underbrace{\left(
    \begin{array}{ccc|ccc|c|c}
      0 &  &  1&  &  &  & & \\
      1 &0  & 0 &  & 0 &  & 0 & 0\\
      0 & 1 & 1 &  &  &  & & \\
      \hline
       &  &  & 0 &  & 0 &  &\\
       & 0  &  & 1 & 0 & 0 &  0&0\\
       &  &  &  0& 1 & 0 &  &\\
       \hline
       &  0 &  &  & 0 &  & 0  &0\\
 \hline
     & 0 &  &   & 0 &   & 0 & 0\\
    \end{array}
  \right)}_{\bar A}+2B,
  $$
where $B$ is any matrix in $S$,
  $$
  B=\left(
    \begin{array}{ccc|ccc|c |c}
       &  &   &  &  &  & & \\
        &  a &  &  & b &  &   c&  d\\
        &   &   &  &  &  & & \\
      \hline
       &  &  &   &  &   &  &\\
       & e  &  &   &  f &   &  g& h\\
       &    &  &   &   &   &   & \\
       \hline
       &  i  &  &  &   j&  &   k &l \\
 \hline
       &  m &  &   &   n&   &  p &q  \\
    \end{array}
  \right),
  $$
where $a,b,\dots, q$ denote matrices of the appropriate sizes.

If we regard $A$ as a matrix over $\mathbb{Z}_2$, we will obtain $\bar A$, whose elementary divisors are $x^3+x^2+1$, $x^3$, $x$, $x$. As in the proof of Theorem~\ref{main}, we add and subtract the element $1$ in the adequate position of the second diagonal box in order to transform the companion matrix of $x^3$ into an invertible matrix plus a zero-square matrix:
  $$
  A=\underbrace{\left(
    \begin{array}{ccc|ccc|c|c}
      0 &  &  1&  &  &  & & \\
      1 & 0 & 0 &  & 0 &  & 0 & 0\\
      0 & 1 & 1 &  &  &  & & \\
      \hline
       &  &  & 0 &  & \boxed{1} &  &\\
       & 0  &  & 1 & 0 & 0 &  0&0\\
       &  &  &  0& 1 & 0 &  &\\
       \hline
       &  0 &  &  & 0 &  & 0  &0\\
 \hline
     & 0 &  &   & 0 &   & 0 & 0\\
    \end{array}
  \right)}_{\hat P=\bar A+e_{4,7}}+\underbrace{\left(
    \begin{array}{ccc|ccc|c|c}
        &  &  &  &  &  & & \\
        &  0&  &  & 0 &  & 0 & 0\\
        &  &  &  &  &  & & \\
      \hline
       &  &  &   &  & \boxed{3} &  &\\
       & 0  &  &  & 0 &   &  0&0\\
       &  &  &   &  &   &  &\\
       \hline
       &  0 &  &  & 0 &  & 0  &0\\
 \hline
     & 0 &  &   & 0 &   & 0 & 0\\
    \end{array}
  \right)}_{\hat N=3e_{4,7}}+2B
  $$
  The matrix $\hat P=\bar A+e_{4,7}$ satisfies
  $$\hat P^{42}= \left(\begin{array}{ccc|ccc|c|c}
        1&  &  &  &  &  & & \\
        &   1&  &  & 0 &  &   &  \\
        &  &  1&  &  &  & & \\
      \hline
       &  &  &   1&  &   &  &\\
       &    &  &  & 1 &   &   & \\
       &  &  &   &  &   1&  &\\
       \hline
       &  0 &  &  & 0 &  & 0  &0\\
 \hline
     & 0 &  &   & 0 &   & 0 & 0\\
    \end{array}
  \right)
  $$
(it is worthwhile noticing that the first diagonal box to the $7^{th}$-power is the identity when regarded as a matrix over $\mathbb{Z}_2$, the second diagonal box to the $3^{rd}$-power is the identity when regarded as a matrix over $\mathbb{Z}_2$, so globally we need $7\times 3\times 2$ to get a common identity over $\mathbb{Z}_4$).

Following the proof of the theorem, let us denote $E=\hat P^{42}$, which is clearly an idempotent of $S$.

Now
  $$
 A=\underbrace{\hat P+E(2B)E+(1-E)(2B)E+E(2B)(1-E)}_P+\underbrace{\hat N+(1-E)(2B)(1-E)}_N
 $$
where
\begin{align*}
 \bullet\ & P=\hat P+E(2B)E+(1-E)(2B)E+E(2B)(1-E)=\\
       &=\left(
    \begin{array}{ccc|ccc|c|c}
      0 &  &  1&  &  &  & & \\
      1 &  & 0 &  & 0 &  & 0 & 0\\
      0 & 1 & 1 &  &  &  & & \\
      \hline
       &  &  & 0 &  & \boxed{1} &  &\\
       & 0  &  & 1 &  & 0 &  0&0\\
       &  &  &  0& 1 & 0 &  &\\
       \hline
       &  0 &  &  & 0 &  & 0  &0\\
 \hline
     & 0 &  &   & 0 &   & 0 & 0\\
    \end{array}
  \right)+2\left(
    \begin{array}{ccc|ccc|c |c}
       &  &   &  &  &  & & \\
        &  a &  &  & b &  &   c&  d\\
        &   &   &  &  &  & & \\
      \hline
       &  &  &   &  &   &  &\\
       & e  &  &   &  f &   &  g& h\\
       &    &  &   &   &   &   & \\
       \hline
       &  i  &  &  &   j&  &   0 &0 \\
 \hline
       &  m &  &   &   n&   & 0 &0  \\
    \end{array}
  \right)\\
  \bullet \ & N=\hat N+(1-E)(2B)(1-E)=\\
  &=\left(
    \begin{array}{ccc|ccc |c |c}
       &  &   &  &  &  & & \\
        &  0  &  &  & 0  &  &   0&0  \\
        &   &   &  &  &  & & \\
      \hline
       &  &  &   &  & \boxed{3}  &  &\\
       &  0 &  &   & 0  &   &  0& 0\\
       &    &  &   &   &   &   & \\
       \hline
       &  0  &  &  &  0 &  &     0&  0\\
 \hline
       &  0 &  &   &  0 &   &   0&  0 \\
    \end{array}
  \right)   +2\left(
    \begin{array}{ccc|ccc|c |c}
       &  &   &  &  &  & & \\
        &  0  &  &  & 0  &  &   0& 0 \\
        &   &   &  &  &  & & \\
      \hline
       &  &  &   &  &    &  &\\
       &  0 &  &  &0    &   & 0 & 0\\
       &    &  &   &   &   &   & \\
       \hline
       &   0 &  & & 0   &  &    k & l \\
 \hline
       &  0 &  &  &0    &   & p &q  \\
    \end{array}
  \right)
\end{align*}
and $P^{43}=P$ and $N^2=0$.

In this example, $\bar A$ was already in its rational canonical form over $\mathbb{Z}_2$. For otherwise, one should consider the appropriate invertible matrix $Q$ such that $Q^{-1}\bar AQ$ is a direct sum of blocks corresponding to elementary divisors and adapt the idempotent matrix $\hat E$ and the nilpotent matrix $\hat N$.
\end{example}

The following construction unambiguously illustrates that the square-prime characteristic of the ring is an essential condition and cannot be dropped off.

\begin{remark}\label{exhibit}
There are matrices over $\mathbb{Z}_{2^3}$ that do not admit a decomposition into potent + zero-square. For example, the matrix

$$
A=2\Id\in \mathbb{M}_n(\mathbb{Z}_{2^3})
$$

\medskip

\noindent does not admit such a decomposition. Otherwise, since $A^2\ne 0$ there would exist a non-zero potent matrix $P$ and a zero-square matrix $N$ such that $A=P+N$. Then $P^4=((A-N)^2)^2=(4\Id-4N)^2=0$, which is not possible if $P$ is potent and non-zero, thus establishing our claim.

On the other side, Theorem~\ref{main} remains no longer true for finite commutative rings of characteristic $p^2$ for some arbitrary but fixed prime $p$. In fact, it suffices to find a finite commutative ring $R$ of characteristic $p^2$ having an element $a$ with $a^3=0$ and $a^2\ne 0$. For example, consider the ring $R=\mathbb{Z}_4[x]/I$ where $I$ is the ideal generated by the polynomial $(x^2+x+1)^3$. The characteristic of $R$ is then exactly $4$. Choose $a=(x^2+x+1)+I\in R$, and let us consider similarly to above the matrix $A=a\Id\in \mathbb{M}_n(R)$ for some $n\in \mathbb{N}$. This matrix $A$ has the properties $A^2\ne 0$ and $A^3=0$, whence with the help of the same argument as above it surely cannot be decomposed into the sum of a potent and a zero-square nilpotent. This concludes our arguments.
\end{remark}

In order to generalize Theorem~\ref{main} to commutative rings of the form $\mathbb{Z}_{p^r}$ for some natural number $r\geq 2$, we first are going to show that potent elements lift modulo a nilpotent ideal. Our proof follows the ideas of the classical lifting of idempotents (see, for instance, \cite[Proposition 27.1]{AF}).

\begin{proposition}\label{potentslift}
Let $R$ be a  finite ring and let $I$ be a nilpotent ideal of $R$ of index $n$. Let us suppose $A\in R$ is such that $\overline A\in R/I$ is a potent element of $R/I$. Then there exists $B\in R$ such that $\overline A=\overline B$ and $B$ is potent in $R$.
\end{proposition}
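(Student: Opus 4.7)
The plan is to mimic the classical lifting-idempotents argument: extract from $\bar A$ an idempotent $\bar e$, lift it to $R$, corner off, and exploit finiteness to get potency. Concretely, if $\bar A^m=\bar A$ for some $m\geq 2$, then $\bar e:=\bar A^{m-1}$ is an idempotent of $R/I$, since
\[
\bar e^{\,2}=\bar A^{2m-2}=\bar A^{m-2}\bar A^{m}=\bar A^{m-2}\bar A=\bar A^{m-1}=\bar e.
\]
Moreover $\bar A\bar e=\bar e\bar A=\bar A^m=\bar A$, and inside $\bar eR/I\bar e$ (with identity $\bar e$) the element $\bar A$ is a unit with inverse $\bar A^{m-2}$, because $\bar A\cdot\bar A^{m-2}=\bar A^{m-1}=\bar e$.

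Next I would apply Lemma~\ref{liftingidempotents} to lift $\bar e$ to an idempotent $f\in R$ and replace $A$ by $B:=fAf$. A short computation shows $\overline{B}=\bar e\bar A\bar e=\bar A^{2m-1}=\bar A^{m-1}\bar A=\bar A$, so $B$ is still a lift of $\bar A$ and lies in the unital subring $fRf$ (with identity $f$). The ideal $fIf$ of $fRf$ is nilpotent (of index at most $n$, since $(fIf)^n\subseteq fI^nf=0$), and modulo $fIf$ the element $B$ maps to the unit $\bar A\in \bar eR/I\bar e$ constructed above.

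Then I would invoke the standard fact that a ring element which is a unit modulo a nilpotent ideal is itself a unit: any right inverse $B'$ of $B$ modulo $fIf$ satisfies $BB'=f-u$ with $u\in fIf$ nilpotent, so $f-u$ is already a unit in $fRf$, giving $B$ a genuine right inverse; symmetrically for the left. Hence $B$ is invertible in $fRf$. Because $R$ is finite, so is $fRf$, and the group of units of $fRf$ is finite; consequently $B$ has finite multiplicative order, i.e., $B^k=f$ for some $k\geq 1$. Multiplying by $B$ (and using $Bf=fB=B$) gives $B^{k+1}=B$ in $R$, so $B$ is potent with $\overline{B}=\bar A$, as required.

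I do not expect a serious obstacle: the only delicate point is checking that the corner $fRf$ is the right place to work, and that the invertibility of $\bar A$ in $\bar eR/I\bar e$ (not merely its potency in $R/I$) is what makes the unit-lifting-from-nilpotent-quotient step applicable. Everything else is bookkeeping with the idempotent $f$ and the finiteness of $R$.
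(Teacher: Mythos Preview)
Your argument is correct. The only steps worth double-checking are that $fRf\cap I=fIf$ (so that $fRf/fIf\cong \bar e(R/I)\bar e$ and $B$ really reduces to the unit $\bar A$ there) and that the abstract lifting lemma applies since $I$ is nilpotent; both go through exactly as you indicate.

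Your route, however, differs from the paper's. The paper does not invoke Lemma~\ref{liftingidempotents} at all: instead, from $(A^t-A)^n=0$ it extracts by a binomial manipulation an element $T$ (a polynomial in $A$) with $A^n=A^{n+t-1}T$, and then sets $E:=A^{n(t-1)}T^n$. This $E$ is shown directly to be an idempotent, and because it is a polynomial in $A$ it automatically commutes with $A$. One then gets a Fitting-type splitting $A=EA+(1-E)A$ with $(1-E)A$ nilpotent and $EA$ a unit in $ERE$; finiteness finishes as in your proof. So the paper builds the idempotent \emph{explicitly inside the subring generated by $A$}, while you lift an idempotent abstractly and then corner. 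Your approach is shorter and more modular (it reduces the potent case cleanly to the idempotent-lifting lemma already on hand), whereas the paper's approach is self-contained, gives an explicit formula for the idempotent, and yields the extra information that the lift $B=EA$ commutes with $A$ and that $A-B=(1-E)A$ is nilpotent of index at most $n$---facts your cornering construction does not directly provide, since your $f$ need not commute with $A$.
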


\begin{proof}Suppose that $\overline A^t=\overline A\in R/I$. Then $A^t-A\in I$ and, therefore,
\begin{align*}
0&=(A^t-A)^n=\sum_{k=0}^n (-1)^{n-k}\binom{n}{k}A^{kt}A^{(n-k)}\\&= (-1)^nA^n-\sum_{k=1}^{n} (-1)^{n-k+1}\binom{n}{k}A^{n+(t-1)k}\\&=
 (-1)^nA^n-A^{n+(t-1)}\left(\sum_{k=1}^n (-1)^{n-k+1}\binom{n}{k}A^{(k-1)(t-1)}\right)
\end{align*}
Let $T:=\sum_{k=1}^n (-1)^{2n-k+1}\binom{n}{k}A^{(k-1)(t-1)}$. Therefore, $A^n=A^{n+(t-1)}T$. Define $E:=A^{n(t-1)}T^n$. Notice that $EA=AE$. Let us show that $E$ is an idempotent of $R$. In fact, one sees that

\begin{align*}
E&=A^{n(t-1)}T^n=A^{n(t-1)+(t-1)}T^{n+1}=A^{n(t-1)+2(t-1)}T^{n+2}=\cdots=\\&=A^{n(t-1)+n(t-1)}T^{n+n}=E^2
\end{align*}

From the above calculations, we may write that
$$
E=(EA)(EA^{n(t-1)-1}T^n),
$$
and so we get that
$EA$ is an invertible element in $ERE$ (note that $ERE$ is a unital ring with identity element $E$). Decompose $A=EA+(1-E)A$. Since $EA^n=A^n$, it must be that $((1-E)A)^n=0$. On the other hand, if we choose $k$ such that $t^k\ge n$ and taking into account that $\bar A$ is $t$-potent, it follows that
$$
\bar A=\bar A^t=\dots=\bar A^{t^k}=(\overline{EA}+\overline{(1-E)A})^{t^k}=\overline{EA}^{t^k}+\overline{((1-E)A}^{t^k}=\overline{EA}^{t^k}\in \overline{ER},
$$
so $\bar A=\overline{EA}+\overline{(1-E)A}\in \overline {ER}$ implies $\bar A=\overline{EA}$.

To conclude that $EA$ is potent, it suffices to consider the finite set $$\{EA, (EA)^2,\dots, (EA)^r,\dots \}$$ to get that $(EA)^l=(EA)^m$ for some $l<m\in \mathbb{N}$, and from the invertibility of $EA$ we get that $(EA)^{m-l}=E$, so $(EA)^{m-l+1}=EA$, as asserted.
\end{proof}

So, we are ready to proceed by proving with the promised generalization.

\begin{corollary} Let $n,r$ be two natural numbers. Then every matrix $A$ in $\mathbb{M}_n(\mathbb{Z}_{p^r})$ can be expressed as $P+N$, where $P$ is a potent matrix and $N$ is a matrix such that $N^2\in \mathbb{M}_n(p^2\mathbb{Z}_{p^r})$.
\end{corollary}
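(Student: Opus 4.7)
The plan is to reduce the problem modulo $p^2$ and invoke the ingredients already assembled: Corollary~\ref{Zp2} for the base case, and Proposition~\ref{potentslift} for lifting the potent piece. Consider the canonical projection $\pi\colon \mathbb{M}_n(\mathbb{Z}_{p^r}) \to \mathbb{M}_n(\mathbb{Z}_{p^2})$ with kernel $I := \mathbb{M}_n(p^2 \mathbb{Z}_{p^r})$. Since $p^2\mathbb{Z}_{p^r}$ is a nilpotent ideal of $\mathbb{Z}_{p^r}$ (of index $\lceil r/2 \rceil$), the ideal $I$ is nilpotent in $\mathbb{M}_n(\mathbb{Z}_{p^r})$, which is itself a finite ring.

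First, I would apply Corollary~\ref{Zp2} to the image $\pi(A) \in \mathbb{M}_n(\mathbb{Z}_{p^2})$, obtaining a decomposition $\pi(A) = P_0 + N_0$ where $P_0$ is potent and $N_0^2 = 0$. Next, since $P_0$ is potent in the quotient $\mathbb{M}_n(\mathbb{Z}_{p^r})/I$ and $I$ is a nilpotent ideal of a finite ring, Proposition~\ref{potentslift} produces a lift $P \in \mathbb{M}_n(\mathbb{Z}_{p^r})$ of $P_0$ that is itself potent. Finally, I would set $N := A - P$ and observe that $\pi(N) = \pi(A) - \pi(P) = N_0$, so that $\pi(N^2) = N_0^2 = 0$; this means precisely $N^2 \in I = \mathbb{M}_n(p^2 \mathbb{Z}_{p^r})$, as required.

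The small cases $r = 1$ and $r = 2$ are subsumed trivially: there the ideal $p^2 \mathbb{Z}_{p^r}$ is zero, and the conclusion becomes $N^2 = 0$, which is exactly what Proposition~\ref{field} and Corollary~\ref{Zp2} already deliver, so the argument can proceed uniformly. Since the whole proof is a direct chase through the established lifting results, I do not anticipate a genuine obstacle; the one substantive choice is to reduce modulo $p^2$ rather than modulo $p$. A reduction modulo $p$ followed by a lift would only guarantee $N^2 \in \mathbb{M}_n(p\mathbb{Z}_{p^r})$, whereas working at the $\mathbb{Z}_{p^2}$-level (where Theorem~\ref{main} already gives a \emph{square-zero} companion) automatically upgrades the error term to the sharper $\mathbb{M}_n(p^2\mathbb{Z}_{p^r})$.
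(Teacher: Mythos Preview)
Your proof is correct and follows essentially the same route as the paper: reduce modulo the nilpotent ideal $I=\mathbb{M}_n(p^2\mathbb{Z}_{p^r})$, apply the $\mathbb{Z}_{p^2}$ result (the paper cites Theorem~\ref{main}, you cite its immediate consequence Corollary~\ref{Zp2}), lift the potent part via Proposition~\ref{potentslift}, and check that the remainder squares into $I$. The only cosmetic difference is that the paper lifts $\bar N$ separately and then absorbs a correction term $V\in I$, whereas you define $N:=A-P$ directly; since $A-P$ equals the paper's $N+V$, the two arguments are literally the same computation.
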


\begin{proof} Let $R=\mathbb{M}_n(\mathbb{Z}_{p^r})$ and let us consider the nilpotent ideal $I$ of $R$ generated by $p^2\Id$, i.e., $I=\mathbb{M}_n(p^2\mathbb{Z}_{p^r})$. Then $R/I\cong \mathbb{M}_n(\mathbb{Z}_{p^2})$ and thus it satisfies the hypothesis of Theorem~\ref{main}. Consequently, there exists $\overline P, \overline N\in R/I$ such that $\bar A=\bar P+\bar N$, where $\overline P$ is potent and $\overline N^2=\overline 0$. By making use of Proposition~\ref{potentslift}, we can lift $\bar P$ to a potent matrix $P$ of $R$. Take any matrix $N\in R$ such that $N$ modulo $I$ coincides with $\bar N$ (in particular we have that $N^2\in I$). Finally, there exists $V\in I$ such that $A=P+N+V$, where
\begin{itemize}
  \item $P$ is potent
  
  \medskip
  
  \item $(N+V)^2=N^2+NV+VN+V^2\in I=\mathbb{M}_n(p^2\mathbb{Z}_{p^r}).$
\end{itemize}
\end{proof}

It is worth noticing that this result extends Corollary~\ref{Zp2} in a more general vision (notice that when $r=2$, the ideal $\mathbb{M}_n(p^2\mathbb{Z}_{p^2})$ is zero and so $N$ has zero-square).

\medskip

We finish off our work with the following conjecture which addresses Remark~\ref{exhibit} quoted above.

\vskip0.5pc

\noindent{\bf Conjecture.} Suppose $m,n\geq 2$ are natural numbers and $p$ is a prime. Then every matrix in $\mathbb{M}_n(\mathbb{Z}_{p^m})$ is a sum of a potent and of a nilpotent of order at most $m$.

\medskip

Note that our results stated above (especially Corollary~\ref{Zp2}) completely settled the problem for $m=2$. In this aspect, can we refine our machinery and results for finite commutative rings of characteristic $p^m$?

\vskip3.0pc

\bibliographystyle{plain}

\end{document}